 \newtheorem{thm}{Theorem}[section]
 \newtheorem{cor}[thm]{Corollary}
 \newtheorem{prop}[thm]{Proposition}
 \theoremstyle{definition}
 \newtheorem{defn}[thm]{Definition}
 \theoremstyle{remark}
 \newtheorem{rem}[thm]{Remark}
 \newtheorem*{ex}{Example}
 \numberwithin{equation}{section}
\newcommand{\ve}{\varepsilon}
\newcommand{\RR}{\mathbb{R}}
\newcommand{\NN}{\mathbb{N}}
\begin{document}

%-------------------------------------------------------------------------
% editorial commands: to be inserted by the editorial office
%
%\firstpage{1} \volume{228} \Copyrightyear{2004} \DOI{003-0001}
%
%
%\seriesextra{Just an add-on}
%\seriesextraline{This is the Concrete Title of this Book\br H.E. R and S.T.C. W, Eds.}
%
% for journals:
%
%\firstpage{1}
%\issuenumber{1}
%\Volumeandyear{1 (2004)}
%\Copyrightyear{2004}
%\DOI{003-xxxx-y}
%\Signet
%\commby{inhouse}
%\submitted{March 14, 2003}
%\received{March 16, 2000}
%\revised{June 1, 2000}
%\accepted{July 22, 2000}
%
%
%
%---------------------------------------------------------------------------
%Insert here the title, affiliations and abstract:
%

\title[On three-point generalizations]{On three-point generalizations of Banach and Edelstein fixed point theorems}

%----------Author 1
\author[Christian Bey]{Christian Bey}

\address{
Institute of Mathematics, University of L\"{u}beck\\
Ratzeburger Allee 160\\
D-23562 L\"{u}beck\\
Germany}

\email{ch.bey@uni-luebeck.de}

%----------Author 2
\author[Evgeniy Petrov]{Evgeniy Petrov}

\address{
Institute of Applied Mathematics and Mechanics\\
of the NAS of Ukraine\\
Dobrovolskogo str. 1\\
84100 Slovyansk\\
Ukraine}

\email{eugeniy.petrov@gmail.com}

%\thanks{This work was completed with the support of our \TeX-pert.}
%----------Author 3
\author{Ruslan Salimov}
\address{Institute of Mathematics of the NAS of Ukraine\\
Tereschenkivska str. 3\\
01024 Kiev\\
Ukraine}

\email{ruslan.salimov1@gmail.com}

%----------classification, keywords, date
\subjclass{Primary 47H10; Secondary 47H09}

\keywords{fixed point theorem, mappings contracting perimeters of triangles, metric space}

%\date{July 30, 2022}
%----------additions
%\dedicatory{To my boss}
%%% ----------------------------------------------------------------------

\begin{abstract}
Let $X$ be a metric space. Recently in~\cite{P23} it was considered a new type of mappings $T\colon X\to X$ which can be characterized as mappings contracting perimeters of triangles. These mappings are defined by the condition based on the mapping of three points of the space instead of two, as it is adopted in many fixed point theorems.
In the present paper we consider so-called $(F,G)$-contracting mappings, which form a more general class of mappings than mappings contracting perimeters of triangles. The fixed point theorem for these mappings is proved. We prove also a fixed point theorem for mappings contracting perimeters of triangles in the sense of Edelstein.

\end{abstract}

%%% ----------------------------------------------------------------------
\maketitle
%%% ----------------------------------------------------------------------
%\tableofcontents

\section{Introduction}
The Contraction Mapping Principle was established by S. Banach in his dissertation (1920) and published in 1922~\cite{Ba22}.  It has been generalized in many ways over the years. It is possible to distinguish two types of generalizations of this theorem: in the first case the contractive nature of the mapping is weakened, see, e.g.
~\cite{Ki03,Wa12,Pr20,Po21};
%~\cite{BW69,Ci74,Ki03,Mk69,Ra62,Re72,Su06,Wa12,Pr20,Po21};
in the second case the topology is weakened, see, e.g.~\cite{DBC12,CS12,JKR11,KS13,LS12,Tu12,SIIR20}.
%~\cite{DBC12,Ba00,CS12,DD07,Fr00,JKR11,KKR90,KS13,LS12,Sa10,SRR09,Ta74,Tu12,SIIR20}

Let $X$ be a nonempty set. Recall that a mapping  $d\colon X\times X\to \mathbb{R}^+$, $\mathbb{R}^+=[0,\infty)$ is a \emph{metric} if for all $x,y,z \in X$ the following axioms hold:
\begin{itemize}
  \item [(i)] $(d(x,y)=0)\Leftrightarrow (x=y)$,
  \item [(ii)] $d(x,y)=d(y,x)$,
  \item [(iii)] $d(x,y)\leqslant d(x,z)+d(z,y)$.%
\end{itemize}
The pair $(X,d)$ is called a \emph{metric space}.

In~\cite{P23} it was considered a new type of mappings $T\colon X\to X$ which can be characterized as mappings contracting perimeters of triangles. %, see Definition~\ref{d1}.
\begin{defn}\label{d1}
Let $(X,d)$ be a metric space with $|X|\geqslant 3$. We shall say that $T\colon X\to X$ is a \emph{mapping contracting perimeters of triangles} on $X$ if there exists $\alpha\in [0,1)$ such that the inequality
  \begin{equation}\label{e1}
   d(Tx,Ty)+d(Ty,Tz)+d(Tx,Tz) \leqslant \alpha (d(x,y)+d(y,z)+d(x,z))
  \end{equation}
  holds for all three pairwise distinct points $x,y,z \in X$.
\end{defn}

Recall that a mapping $T\colon X\to X$ is a \emph{contraction} on the metric space $(X,d)$ if there exists $\alpha\in [0,1)$ such that
\begin{equation}\label{e3}
d(Tx,Ty)\leqslant \alpha d(x,y)
\end{equation}
for all $x,y \in X$.

It is clear that every contraction is a mapping contractive perimeters of triangles.

\begin{rem}
Note that the requirement for $x,y,z\in X$ to be pairwise distinct in Definition~\ref{d1} is essential. One can see that otherwise this definition is equivalent to the definition of contraction mapping.
\end{rem}

In~\cite{P23} it was shown that mappings contracting perimeters of triangles are continuous. The fixed point theorem for such mappings was proved and the classical Banach fixed-point theorem was obtained like a simple corollary.  An example of a mapping contracting perimeters of triangles which is not a contraction mapping was constructed for a space $X$ with $\operatorname{card}(X)=\aleph_0$.

In~\cite{KS14} authors noted that except Banach's fixed point theorem there are also three classical fixed point theorems against which metric extensions are usually checked. These are, respectively, Nadler's well-known set-valued extension of Banach's theorem~\cite{Na69}, the extension of Banach's theorem to nonexpansive mappings~\cite{Ki65}, and Caristi's theorem~\cite{Ca76}. Note that an important place in the fixed point theory is also occupied by Edelstein's~\cite{Ed62} fixed point theorem, the scheme of the proof of which is fundamentally different from the proof of above mentioned theorems.

\begin{thm}[Edelstein, 1962]\label{t13}
Let $X$ be a metric space and let $T\colon X\to X$ be a mapping satisfying
\begin{equation}\label{e31}
  d(Tx,Ty)<d(x,y)
\end{equation}
for all $x\neq y$, $x,y \in X$. Assume that there exists $x\in X$ such that the sequence of iterates $(T^nx)$ contains a subsequence $(T^{n_k}x)$ convergent to a point $\xi \in X$. Then $\xi$ is a unique fixed point of $T$.
\end{thm}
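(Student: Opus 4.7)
The plan is to exploit the monotonicity of the distances between successive iterates combined with the continuity that strict contractivity already forces. First, I would observe that \eqref{e31} implies $d(Tx,Ty)\leqslant d(x,y)$ for all $x,y\in X$ (the strict inequality holds for $x\neq y$ and the trivial equality for $x=y$), so $T$ is nonexpansive and in particular continuous. If at some stage $T^n x = T^{n+1}x$, then $T^n x$ is already a fixed point and we pass straight to uniqueness; so we may assume $T^n x \neq T^{n+1}x$ for every $n$.

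Set $a_n := d(T^n x, T^{n+1}x)$. Applying \eqref{e31} to the pair $T^n x, T^{n+1}x$ yields $a_{n+1}<a_n$, so $(a_n)$ is strictly decreasing and bounded below by $0$, hence converges to some $a\geqslant 0$. The crucial step is to evaluate this limit in two different ways along the subsequence $(n_k)$. By continuity of $T$ applied twice, $T^{n_k+1}x \to T\xi$ and $T^{n_k+2}x \to T^2\xi$. Taking the limit of $a_{n_k}$ along $(n_k)$ gives $a = d(\xi,T\xi)$, while taking the limit of $a_{n_k+1}$ along $(n_k+1)$ gives $a = d(T\xi,T^2\xi)$.

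Consequently $d(\xi,T\xi)=d(T\xi,T^2\xi)$. If it were the case that $\xi\neq T\xi$, the strict contractivity \eqref{e31} applied to the pair $(\xi,T\xi)$ would force $d(T\xi,T^2\xi)<d(\xi,T\xi)$, contradicting this equality; hence $T\xi=\xi$. Uniqueness is immediate: if $\xi'\neq\xi$ were another fixed point, \eqref{e31} would give $d(\xi,\xi')=d(T\xi,T\xi')<d(\xi,\xi')$, absurd.

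The main obstacle is the passage from the convergent subsequence to an actual fixed point. Unlike in the Banach setting, strict contractivity alone is too weak to show that the whole orbit $(T^n x)$ is Cauchy, and completeness of $X$ is not assumed, so one cannot simply push the map $T$ through a limit of the entire sequence. Edelstein's device of extracting the monotone limit $a$ along both $(n_k)$ and $(n_k+1)$ to pin down the identity $d(\xi,T\xi)=d(T\xi,T^2\xi)$, and then collapsing it via \eqref{e31}, is the conceptual heart of the argument and the step that most needs to be reproduced faithfully in any triangle-perimeter analogue.
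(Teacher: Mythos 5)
Your proof is correct. Note, though, that the paper does not prove Theorem~\ref{t13} at all: it is quoted as the classical result of Edelstein (1962), and the proof scheme the authors actually reproduce for their three-point analogue (Theorem~\ref{t1}) is Edelstein's original one --- continuity of the ratio $r(q,s,t)=p(Tq,Ts,Tt)/p(q,s,t)$ on a neighbourhood of $(\xi,T\xi,T^2\xi)$, a uniform bound $r<R<1$ there, and geometric decay of perimeters along the subsequence contradicting a lower bound like~(\ref{ee3}). You instead use the shorter, well-known monotone-limit argument: $a_n=d(T^nx,T^{n+1}x)$ decreases to some $a$, and evaluating $a$ along $(n_k)$ and $(n_k+1)$ gives $d(\xi,T\xi)=d(T\xi,T^2\xi)$, which strict contractivity collapses to $T\xi=\xi$ (in the degenerate case $T^nx=T^{n+1}x$ the orbit is eventually constant, so $\xi$ is that fixed point --- worth a sentence). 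Both routes are valid; yours buys brevity and avoids any topological bookkeeping, while Edelstein's neighbourhood argument is the one that transfers more directly to the perimeter setting: there the analogue of your step would need $\xi$, $T\xi$, $T^2\xi$ pairwise distinct before the strict perimeter inequality can be applied, and the limit identity $p(\xi,T\xi,T^2\xi)=p(T\xi,T^2\xi,T^3\xi)$ alone does not force a fixed point, which is precisely why the paper adds the no-period-two hypothesis and a case analysis rather than reproducing your device. So your closing remark about which step ``most needs to be reproduced'' in the triangle-perimeter analogue does not match what the paper actually does, but this does not affect the correctness of your proof of the stated theorem.
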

Clearly, that if $X$ is a compact metric space and $T\colon X\to X$ satisfies ~(\ref{e31}) for all $x\neq y$, $x,y \in X$, then there exists a unique fixed point. Recall that mappings of type~(\ref{e31}) are called \emph{contractive}.

One new interesting proof of Edelstein's theorem was given in~\cite{DK98}. Note that generalizations of this theorem are not as numerous as generalizations of Banach's theorem. One of the most famous generalizations is Suzuki's~\cite{Su09} theorem. Let us mention also generalizations of Theorem~\ref{t13} in topological spaces~\cite{Li80, Ja75}, $\nu$-generalized metric spaces~\cite{Suz18,SABM15,Suz16}, complete metric spaces~\cite{Suz17}, compact metric spaces~\cite{Suz19}, Cartesian product of metric spaces~\cite{Sz76}. See~\cite{Ka12,DKR12,RSS79,DIK09,MSV13,SV15,Cze24} for further developments in this direction.

In Section~\ref{sec2} we consider so-called $(F,G)$-contracting mappings, which form a more general class of mappings than mappings contracting perimeters of triangles, see Definition~\ref{d4}. We show that $(F,G)$-contracting mappings are continuous and prove the fixed point theorem for these mappings.

In Section~\ref{sec3} we show continuity and prove a fixed point theorem for mappings contracting perimeters of triangles in the sense of Edelstein.

\section{Mappings with controlled contraction}\label{sec2}

In this section we consider a more general class of mappings in ordinary metric spaces than mappings contracting perimeters of triangles and prove a fixed point theorem for this class.

Note that Definition~\ref{d4} and Theorem~\ref{t2} were initially presented at the International scientific online conference ``Algebraic and geometric methods of analysis''~\cite{PS23}.

\begin{defn}\label{d4}
Let $(X,d)$ be a metric space with $|X|\geqslant 3$ and let functions $F,G\colon \RR^+\times\RR^+\times\RR^+\to\RR^+$ be such that for all $\xi, \eta,\zeta\in\RR^+$ the following conditions hold:

\begin{equation}\label{u1}
\begin{split}
    F(\eta,\xi,\zeta)=F(\xi,\eta,\zeta)=F(\xi,\zeta,\eta),\\
    G(\eta,\xi,\zeta)=G(\xi,\eta,\zeta)=G(\xi,\zeta,\eta);
\end{split}
\end{equation}
\begin{equation}\label{u2}
 G(\xi,\eta,\zeta)\geqslant \xi;
\end{equation}
\begin{equation}\label{u3}
   F(\xi,\eta,\zeta)\geqslant G(\xi,\eta,\zeta);
\end{equation}
%\begin{equation}\label{u4}
%   F \text{ is strictly increasing in each variable},
%\end{equation}
\begin{equation}\label{u44}
 G(0,0,0)=0 \text{ and }  G \text{ is continuous at } (0,0,0);
\end{equation}
\begin{equation}\label{u45}
\text{The function } G \text{ is monotone increasing in all of its arguments.}
\end{equation}

We shall say that $T\colon X\to X$ is an $(F,G)$-\emph{contracting mapping} on $X$ if there exists $\alpha\in [0,1)$ such that the inequality
  \begin{equation}\label{u5}
   F(d(Tx,Ty),d(Ty,Tz),d(Tx,Tz)) \leqslant \alpha\,G(d(x,y),d(y,z),d(x,z))
  \end{equation}
  holds for all three pairwise distinct points $x,y,z \in X$.
\end{defn}

\begin{rem}
Inequality~(\ref{u5}) and condition~(\ref{u44}) imply
$$
F(0,0,0)=0.
$$
\end{rem}

\begin{rem}
Note that~(\ref{u2}) and~(\ref{u3}) imply
 \begin{equation}\label{uu2}
   F(\xi,\eta,\zeta)\geqslant \xi
\end{equation}
for all $\xi,\eta,\zeta \in \mathbb R^+$.
\end{rem}

\begin{rem}
Note that the requirement for $x,y,z\in X$ to be pairwise distinct is essential. One can see that, otherwise, in the case
$$
 F(\xi,\eta,\zeta)=G(\xi,\eta,\zeta)=\xi+\eta+\zeta
$$
(when $F$ and $G$ are such that $T$ is a mapping contracting perimeters of triangles) this definition is equivalent to Definition~\ref{d1}.
\end{rem}

\begin{prop}
If in Definition~\ref{d4} the function $G$ additionally satisfies the condition
\begin{equation}\label{u22}
 G(\xi,\eta,\zeta)\leqslant K\xi \, \text{ with }\,  K\alpha<1,
\end{equation}
then $T$ is a mapping contracting perimeters of triangles.
\end{prop}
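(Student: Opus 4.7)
Fix three pairwise distinct points $x,y,z\in X$ and set $a=d(Tx,Ty)$, $b=d(Ty,Tz)$, $c=d(Tx,Tz)$, $A=d(x,y)$, $B=d(y,z)$, $C=d(x,z)$. The goal is to produce a constant $\beta\in[0,1)$ such that $a+b+c\leqslant\beta(A+B+C)$, which is precisely the condition~(\ref{e1}) defining a mapping contracting perimeters of triangles.

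First I would upgrade the hypothesis~(\ref{u22}) by combining it with the full symmetry~(\ref{u1}) of $G$. Applying $G(\xi,\eta,\zeta)\leqslant K\xi$ to the permuted triples $(\eta,\xi,\zeta)$ and $(\zeta,\eta,\xi)$, and then using~(\ref{u1}) to return to $G(\xi,\eta,\zeta)$, one obtains $G(\xi,\eta,\zeta)\leqslant K\eta$ and $G(\xi,\eta,\zeta)\leqslant K\zeta$ as well, hence
\[
G(\xi,\eta,\zeta)\leqslant K\min(\xi,\eta,\zeta).
\]
In particular $G(A,B,C)\leqslant K\min(A,B,C)$.

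Next I would extract pointwise bounds on $a,b,c$ from~(\ref{uu2}) and~(\ref{u5}). By the same symmetry trick applied to~(\ref{uu2}), $F(a,b,c)\geqslant\max(a,b,c)$, so each of $a,b,c$ is at most $F(a,b,c)$, which in turn is at most $\alpha G(A,B,C)\leqslant\alpha K\min(A,B,C)$ by the previous step and the $(F,G)$-contracting inequality.

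Summing the three bounds yields
\[
a+b+c\leqslant 3\alpha K\min(A,B,C)\leqslant\alpha K(A+B+C),
\]
where the final step is the elementary inequality $3\min(A,B,C)\leqslant A+B+C$. Since $K\alpha<1$ by hypothesis, the constant $\beta:=K\alpha\in[0,1)$ witnesses that $T$ contracts perimeters of triangles in the sense of Definition~\ref{d1}. I do not anticipate a genuine obstacle here; the only point that requires a small argument is the passage from~(\ref{u22}) to the $\min$-bound, which is purely a symmetry manipulation.
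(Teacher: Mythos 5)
Your proof is correct and follows essentially the same route as the paper: both exploit the full symmetry~(\ref{u1}) to turn~(\ref{uu2}) and~(\ref{u22}) into three individual bounds, then combine them with~(\ref{u5}) to get the perimeter inequality with the same constant $\alpha K$. The only cosmetic difference is that you package the three bounds as $F\geqslant\max$ and $G\leqslant K\min$ and then use $3\min(A,B,C)\leqslant A+B+C$, whereas the paper averages them into $F(\xi,\eta,\zeta)\geqslant(\xi+\eta+\zeta)/3$ and $G(\xi,\eta,\zeta)\leqslant K(\xi+\eta+\zeta)/3$.
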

\begin{proof}
Indeed, by~(\ref{u1}) and~(\ref{uu2}) we have
$$
   F(\xi,\eta,\zeta)\geqslant \xi, \quad
   F(\xi,\eta,\zeta)\geqslant \eta, \quad
      F(\xi,\eta,\zeta)\geqslant \zeta.
$$
Hence,
$$
 \frac{\xi+\eta+\zeta}{3} \leqslant F(\xi,\eta,\zeta).
$$
Analogously, by~(\ref{u1}) and~(\ref{u22}) we have
$$
  G(\xi,\eta,\zeta)\leqslant  K\frac{\xi+\eta+\zeta}{3}.
$$
Finally, using the last two inequalities in~(\ref{u5}), we get the desired assertion.
\end{proof}

\begin{prop}\label{p36}
Let $(X,d)$ be a metric space, $|X|\geqslant 3$, and let $T\colon X\to X$ be an $(F,G)$-contracting mapping on $X$. Then $T$ is continuous.
\end{prop}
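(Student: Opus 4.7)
We prove continuity of $T$ at an arbitrary $x_0 \in X$. Take a sequence $x_n \to x_0$; we want to show $Tx_n \to Tx_0$. The central device is to apply the $(F,G)$-contraction \eqref{u5} to the triple $(x_n, x_0, y)$ for an auxiliary point $y$ chosen in a small neighborhood of $x_0$, so that all three distance arguments of $G$ become small; the quantity $d(Tx_n, Tx_0)$ is extracted on the left-hand side by means of the lower bound \eqref{uu2} on $F$.

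If $x_0$ is isolated in $X$, then $x_n = x_0$ eventually and there is nothing to prove, so I would assume $x_0$ is an accumulation point. Fix $\epsilon > 0$. Using $G(0,0,0)=0$ and continuity of $G$ at the origin (condition \eqref{u44}), choose $\delta > 0$ so that $G(a,b,c) < \epsilon$ whenever $0 \leq a, b, c < \delta$. Since $x_0$ is an accumulation point, I can pick $y \in X$ with $y \neq x_0$ and $d(y, x_0) < \delta/2$.

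Discarding terms with $x_n = x_0$ (which trivially satisfy $Tx_n = Tx_0$), we may assume $x_n \neq x_0$ for every $n$. Since $d(x_n, x_0) \to 0 < d(y, x_0)$, for all sufficiently large $n$ we have $x_n \neq y$ and both $d(x_n, x_0) < \delta$ and $d(x_n, y) \leq d(x_n, x_0) + d(x_0, y) < \delta$. Applying \eqref{u5} to the pairwise distinct triple $(x_n, x_0, y)$ and combining with \eqref{uu2} yields
\begin{equation*}
d(Tx_n, Tx_0) \leq F\bigl(d(Tx_n, Tx_0), d(Tx_0, Ty), d(Tx_n, Ty)\bigr) \leq \alpha\, G\bigl(d(x_n, x_0), d(x_0, y), d(x_n, y)\bigr) < \epsilon,
\end{equation*}
where the final inequality uses $\alpha < 1$ together with the choice of $\delta$. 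Hence $Tx_n \to Tx_0$.

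The main subtlety is the choice of the third point $y$: a point $y$ fixed once and for all would keep $d(x_0, y)$ bounded away from zero, preventing the second and third arguments of $G$ from being small. One must let $y$ depend on $\epsilon$ and sit inside a small ball about $x_0$, which is possible precisely when $x_0$ is an accumulation point of $X$; this is what forces the preliminary split into the isolated and accumulation cases.
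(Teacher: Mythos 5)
Your proof is correct and follows essentially the same route as the paper's: split into the isolated/accumulation cases, insert an auxiliary third point $y$ close to $x_0$, apply \eqref{u5} together with the lower bound \eqref{uu2} to extract $d(Tx_0,Tx_n)$, and conclude via the continuity of $G$ at the origin \eqref{u44}. The only (harmless) difference is that you make all three arguments of $G$ smaller than $\delta$ and invoke continuity on a small box directly, whereas the paper first uses the triangle inequality and the monotonicity condition \eqref{u45} to reduce to the single value $\alpha\,G(\delta,\delta,2\delta)$, so your variant does not even need \eqref{u45}.
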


\begin{proof}
Let $(X,d)$ be a metric space with $|X|\geqslant 3$, $T\colon X\to X$ be a mapping contracting perimeters of triangles on $X$ and let $x_0$ be an isolated point in $X$. Then, clearly, $T$ is continuous at $x_0$. Let now $x_0$ be an accumulation point. Let us show that for every $\ve>0$, there exists $\delta>0$ such that $d(Tx_0,Tx)<\ve$ whenever $d(x_0,x)<\delta$. Suppose that $x\neq x_0$, otherwise this assertion is evident.
Since $x_0$ is an accumulation point, for every $\delta>0$ there exists $y\in X$ such that $x_0\neq y\neq x$ and $d(x_0,y)< \delta$. Since the points $x_0$, $x$ and $y$ are pairwise distinct by~(\ref{uu2}) and~(\ref{u5})   we have
$$
d(Tx_0,Tx)\leqslant F(d(Tx_0,Tx),d(Tx_0,Ty),d(Tx,Ty))
$$
$$
\leqslant \alpha G(d(x_0,x),d(x_0,y),d(x,y)).
$$
Further, using the triangle inequality $d(x,y) \leqslant d(x_0,x)+d(x_0,y)$ and~(\ref{u45}), we get
$$
d(Tx_0,Tx)\leqslant \alpha G(d(x_0,x),d(x_0,y),d(x_0,x)+d(x_0,y))
\leqslant
\alpha G(\delta,\delta,2\delta).
$$
Finally, using~(\ref{u44}), we get that for every $\ve>0$ there exists $\delta>0$ such that the inequality
$\alpha G(\delta,\delta,2\delta)<\ve$ holds, which completes the proof.
\end{proof}

Let $T$ be a mapping on the metric space $X$. A point $x\in X$ is called a \emph{periodic point of period $n$}, $n\in \mathbb N\setminus\{0\}$, if $T^n(x) = x$. The least positive integer $n$ for which $T^n(x) = x$ is called the prime period of $x$, see, e.g.,~\cite[p.~18]{De22}. In particular, the point $x$ is of prime period $2$ if $T(T(x))=x$ and $Tx\neq x$.

\begin{thm}\label{t2}
Let $(X,d)$, $|X|\geqslant 3$, be a complete metric space and let  $T\colon X\to X$ be a mapping satisfying the following two conditions:
\begin{itemize}
  \item [(i)] $T$ does not possess periodic points of prime period $2$.
  \item [(ii)] $T$ is an $(F,G)$-contracting mapping on $X$.
\end{itemize}
Then $T$ has a fixed point. The number of fixed points is at most two.
\end{thm}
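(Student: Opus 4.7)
The plan is to adapt the Banach-style iteration argument to the $(F,G)$-framework, replacing the triangle perimeter from~\cite{P23} by the scalar invariant $G_n:=G(d(x_n,x_{n+1}),d(x_{n+1},x_{n+2}),d(x_n,x_{n+2}))$. Starting from an arbitrary $x_0\in X$, I set $x_n:=T^n x_0$. If $x_{n_0}=x_{n_0+1}$ for some $n_0$, then $x_{n_0}$ is already a fixed point and nothing more is required, so I will assume $x_n\neq x_{n+1}$ for every $n$. Hypothesis~(i) forbids $x_n=x_{n+2}$ as well (otherwise $x_n$ would be a periodic point of prime period $2$), so the three iterates $x_n,x_{n+1},x_{n+2}$ are pairwise distinct for every $n$, and inequality~(\ref{u5}) is applicable to them.

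Applying~(\ref{u5}) with $x=x_n$, $y=x_{n+1}$, $z=x_{n+2}$, and then dropping from $F$ down to $G$ via~(\ref{u3}) with the help of the symmetry~(\ref{u1}), yields $G_{n+1}\leqslant F(d(x_{n+1},x_{n+2}),d(x_{n+2},x_{n+3}),d(x_{n+1},x_{n+3}))\leqslant \alpha G_n$, whence $G_n\leqslant \alpha^n G_0$. By~(\ref{u2}) combined with~(\ref{u1}), each of the three distances appearing inside $G_n$ is bounded by $G_n$ itself; in particular $d(x_n,x_{n+1})\leqslant \alpha^n G_0$. Summing this geometric estimate in the standard way shows that $(x_n)$ is Cauchy, and completeness together with the continuity of $T$ (Proposition~\ref{p36}) produce a fixed point $\xi=\lim_n x_n$.

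For the bound on the number of fixed points I argue by contradiction. Suppose $\xi_1,\xi_2,\xi_3$ are three pairwise distinct fixed points. Plugging them into~(\ref{u5}) gives $F(d(\xi_1,\xi_2),d(\xi_2,\xi_3),d(\xi_1,\xi_3))\leqslant \alpha G(d(\xi_1,\xi_2),d(\xi_2,\xi_3),d(\xi_1,\xi_3))$, while~(\ref{u3}) gives $F\geqslant G$ and~(\ref{u2}) gives $G\geqslant d(\xi_1,\xi_2)>0$. Together these force the strictly positive quantity $G(d(\xi_1,\xi_2),d(\xi_2,\xi_3),d(\xi_1,\xi_3))$ to satisfy $G\leqslant \alpha G$ with $\alpha<1$, which is impossible.

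The main conceptual difficulty is to choose the right scalar to iterate on. Tracking the perimeter $d(x_n,x_{n+1})+d(x_{n+1},x_{n+2})+d(x_n,x_{n+2})$ directly, as was done in the special case of~\cite{P23}, does not yield a clean recursion in this generality, because $F$ and $G$ are not assumed to be sums and no monotonicity of $F$ is available; the trick is to first evaluate~(\ref{u5}) and then use $F\geqslant G$ to convert the inequality into the clean telescoping estimate $G_{n+1}\leqslant\alpha G_n$. Hypothesis~(i) is precisely what keeps this iteration alive, since it guarantees that every three consecutive iterates remain pairwise distinct and hence that~(\ref{u5}) can always be invoked.
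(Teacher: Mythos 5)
Your proof is correct and follows essentially the same route as the paper's: iterate~(\ref{u5}) on consecutive triples (pairwise distinct thanks to hypothesis (i) once no iterate is a fixed point), use~(\ref{u3}) to pass from $F$ to $G$ and get the geometric decay $G_{n+1}\leqslant\alpha G_n$, bound $d(x_n,x_{n+1})$ via~(\ref{u2}) and~(\ref{u1}) to obtain a Cauchy sequence, conclude by completeness and the continuity of $T$ (Proposition~\ref{p36}), and exclude three distinct fixed points by the same $G\leqslant\alpha G$ contradiction. The only difference is cosmetic: the paper additionally checks that all iterates are pairwise distinct, a step your version of the argument does not need.
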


\begin{proof}
Let $x_0\in X$. Consider the sequence  $x_1=Tx_0$, $x_2=Tx_1$, \ldots, $x_{n+1}=Tx_n$, \ldots . Suppose first that $x_i$ is not a fixed point of the mapping $T$ for every $i=0,1,...$. Let us show that all $x_i$ are different. Since $x_i$ is not fixed, then $x_i\neq x_{i+1}=Tx_i$. By condition (i) $x_{i+2}=T(T(x_i))\neq x_i$ and by the assumption that $x_{i+1}$ is not fixed we have $x_{i+1}\neq x_{i+2}=Tx_{i+1}$. Hence, $x_i$, $x_{i+1}$ and $x_{i+2}$ are pairwise distinct.

Further, set
$$
\tilde{p}_0=F(d(x_0,x_1),d(x_1,x_2),d(x_2,x_0)), \, \, p_0=G(d(x_0,x_1),d(x_1,x_2),d(x_2,x_0)),
$$
$$
\tilde{p}_1=F(d(x_1,x_2),d(x_2,x_3),d(x_3,x_1)), \, \,
p_1=G(d(x_1,x_2),d(x_2,x_3),d(x_3,x_1)),
$$
$$
\cdots
$$
$$
\tilde{p}_n=F(d(x_n,x_{n+1}),d(x_{n+1},x_{n+2}),d(x_{n+2},x_n)), $$
$$
p_n=G(d(x_n,x_{n+1}),d(x_{n+1},x_{n+2}),d(x_{n+2},x_n)),
$$
$$
\cdots .
$$
Then by~(\ref{u5}) we have $\tilde{p}_1\leqslant \alpha p_0$, $\tilde{p}_2\leqslant \alpha p_1$, \ldots, $\tilde{p}_n\leqslant \alpha p_{n-1}$.
Hence and by condition~(\ref{u3}) we have
\begin{equation}\label{e22}
p_0>\alpha p_0\geqslant \tilde{p}_1\geqslant p_1>\alpha p_1 \geqslant \tilde{p}_2\geqslant p_2 \ldots .
\end{equation}
Consequently,
\begin{equation}\label{e21}
p_0>p_1>...>p_n>\ldots .
\end{equation}
Suppose now that $j\geqslant 3$ is a minimal natural number such that $x_j=x_i$ for some $i$ such that $0\leqslant i<j-2$. Then  $x_{j+1}=x_{i+1}$, $x_{j+2}=x_{i+2}$. Hence, $p_i=p_j$ which contradicts to~(\ref{e21}). Thus, all $x_i$ are different.

Further, let us show that $\{x_i\}$ is a Cauchy sequence.  By~(\ref{u1}), (\ref{u2}) and~(\ref{e22})  we have
$$
d(x_1,x_2)\leqslant p_0,
$$

$$
d(x_2,x_3)\leqslant p_1\leqslant \alpha p_0,
$$

$$
d(x_3,x_4)\leqslant p_2\leqslant \alpha p_1\leqslant  \alpha^2 p_0,
$$
$$
\cdots
$$
$$
d(x_n,x_{n+1})\leqslant p_{n-1}\leqslant \alpha^{n-1} p_0,
$$
$$
d(x_{n+1},x_{n+2})\leqslant p_{n}\leqslant \alpha^{n} p_0,
$$
$$
\cdots .
$$
By the triangle inequality,
$$
d(x_n,\,x_{n+p})\leqslant d(x_{n},\,x_{n+1})+d(x_{n+1},\,x_{n+2})+\ldots+d(x_{n+p-1},\,x_{n+p})
$$
$$
\leqslant \alpha^{n-1}p_0+\alpha^{n}p_0+\cdots +\alpha^{n+p-2}p_0 = \alpha^{n-1}(1+\alpha+\ldots+\alpha^{p-1})p_0
=\alpha^{n-1}\frac{1-\alpha^{p}}{1-\alpha}p_0.
$$
Since $0\leqslant\alpha<1$ we have $0\leqslant\alpha^p<1$ and
$$
d(x_n,\,x_{n+p})\leqslant \frac{p_0\alpha^{n-1}}{1-\alpha}.
$$
Hence, $d(x_n,\,x_{n+p})\to 0$ as $n\to \infty$ for every $p>0$. Thus, $\{x_n\}$ is a Cauchy sequence. By completeness of $(X,d)$, this sequence has a limit $x^*\in X$.

Let us prove that $Tx^*=x^*$. Since $x_n\to x^*$, and by Proposition~\ref{p36} the mapping $T$ is continuous, we have $x_{n+1}=T x_n\to Tx^*$.  By triangle inequality we have
$$
d(x^*,Tx^*)\leqslant d(x^*,x_{n})+d(x_{n},Tx^*)
\to 0
$$
as $n\to \infty$, which means that $x^*$ is the fixed point.

Suppose that there exist at least three pairwise distinct fixed points $x$, $y$ and $z$.  Then $Tx=x$, $Ty=y$ and $Tz=z$, which contradicts to~(\ref{u5}) combined with~(\ref{u3}).
\end{proof}

\begin{ex}
It is easy to see that the class of functions $F$ and $G$ satisfying the conditions of Definition~\ref{d4} is enough wide. As an example we can take $F(\xi,\eta,\zeta)=\xi+\eta+\zeta$ and $G(\xi,\eta,\zeta)=(\xi^{q}+\eta^{q}+\zeta^{q})^{\frac{1}{q}}$, $q\geqslant 1$.
\end{ex}

\begin{ex}
Let $G(\xi,\eta,\zeta)=\xi+\eta+\zeta$ and let
$$
F(\xi,\eta,\zeta)=3\varphi^{-1}\left(\frac{\varphi(\xi)+\varphi(\eta)+\varphi(\zeta)}{3}\right),
$$
where $\varphi\colon[0,\infty)\to [0,\infty)$ is continuous, strictly increasing and  convex function. Since $\varphi$ is convex by Jensen's inequality we have
$$
\varphi\left(\frac{\xi+\eta+\zeta}{3}\right)\leqslant \frac{\varphi(\xi)+\varphi(\eta)+\varphi(\zeta)}{3}
$$
for all $\xi,\eta,\zeta \in [0,\infty)$, which implies condition~(\ref{u3}). The other conditions of Definition~\ref{d4} can be easily verified by the reader.
\end{ex}

\begin{ex}
Let the function $H$ satisfy all conditions of Definition~\ref{d4} that satisfies the function $G$, then as $F$ we can take $F(\xi,\eta,\zeta)=G(\xi,\eta,\zeta)+H(\xi,\eta,\zeta)$.
\end{ex}

\begin{rem}
Let the supposition of Theorem~\ref{t2} holds, let additionally $F$ and $G$ be continuous at their domain and let the mapping $T$ have a fixed point $x^*$ which is a limit of some iteration sequence $x_0, x_1=Tx_0, x_2=Tx_1,\ldots$ such that $x_n\neq x^*$ for all $n=1,2,\ldots$. Then $x^*$ is a unique fixed point.

Indeed, suppose that $T$ has another fixed point $x^{**}\neq x^*$.
It is clear that $x_n\neq x^{**}$ for all $n=1,2,\ldots$. Hence, we have that the points $x^*$, $x^{**}$ and $x_n$ are pairwise distinct for all $n=1,2,\ldots$. Consider the ratio
$$
R_n=\frac{F(d(Tx^*,Tx^{**}),d(Tx^*,Tx_{n}),d(Tx^{**},Tx_{n}))}{G(d(x^*,x^{**}),d(x^*,x_{n}),d(x^{**},x_{n}))}
$$
$$
=\frac{F(d(x^*,x^{**}),d(x^*,x_{n+1}),d(x^{**},x_{n+1}))}{G(d(x^*,x^{**}),d(x^*,x_{n}),d(x^{**},x_{n}))}.
$$
It is clear that $d(x^*,x_{n+1})\to 0$, $d(x^*,x_{n})\to 0$, $d(x^{**},x_{n+1})\to d(x^{**},x^*)$ and $d(x^{**},x_{n})\to d(x^{**},x^*)$. Taking into consideration condition~(\ref{u3}),
suppose first that
$$
F(d(x^*,x^{**}),0,d(x^{**},x^*))=G(d(x^*,x^{**}),0,d(x^{**},x^*)).
$$
In this case by continuity of $F$ and $G$ we get $R_n\to 1$ as $n\to \infty$.
Now suppose that
$$
F(d(x^*,x^{**}),0,d(x^{**},x^*))> G(d(x^*,x^{**}),0,d(x^{**},x^*)).
$$
In this case by continuity of $F$ and $G$ we obtain
$R_n> 1$ for some sufficiently large $n$. Anyway both cases contradict to condition~(\ref{u5}).
\end{rem}

Recall that for a given metric space $X$, a point $x \in X$ is said to be an \emph{accumulation point} of  $X$ if every open ball centered at $x$ contains infinitely many points of $X$.

\begin{prop}\label{p3}
Let $(X,d)$ be a metric space and let $T\colon X\to X$ be an $(F,G)$-contracting mapping on $X$ with continuous $F$ and $G$ and with
\begin{equation}\label{g1}
  G(\xi,\xi,0) \leqslant k\xi,
\end{equation}
where $k$ is such that $\alpha k<1$ and $\alpha$ is as in~(\ref{u5}).
If $x$ is an accumulation point of $X$, then inequality~(\ref{e3}) holds for all points $y\in X$ with the coefficient $\alpha k$.
\end{prop}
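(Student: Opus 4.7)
The plan is to exploit the accumulation-point hypothesis by inserting a third point $z_n\to x$ into the $(F,G)$-contraction applied to the triple $(x,y,z_n)$, and then let $n\to\infty$. If $y=x$ the conclusion $d(Tx,Ty)\leqslant \alpha k\, d(x,y)$ is the trivial equality $0\leqslant 0$, so I restrict to $y\neq x$. Since $x$ is an accumulation point, every neighborhood of $x$ contains infinitely many points of $X$; hence I can pick $(z_n)\subset X$ with $z_n\to x$, $z_n\neq x$, and $z_n\neq y$ (excluding the single point $y$ is harmless). The triple $(x,y,z_n)$ is then pairwise distinct and~(\ref{u5}) applies.

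The second step is to take the limit on both sides of
\[
F(d(Tx,Ty),d(Ty,Tz_n),d(Tx,Tz_n))\leqslant \alpha\,G(d(x,y),d(y,z_n),d(x,z_n)).
\]
The metric itself gives $d(x,z_n)\to 0$ and $d(y,z_n)\to d(x,y)$; continuity of $T$, guaranteed by Proposition~\ref{p36}, gives $Tz_n\to Tx$ and hence $d(Tx,Tz_n)\to 0$ and $d(Ty,Tz_n)\to d(Tx,Ty)$. Using the assumed continuity of $F$ and $G$ together with the symmetry in~(\ref{u1}), the inequality passes to the limit as
\[
F(d(Tx,Ty),d(Tx,Ty),0)\leqslant \alpha\,G(d(x,y),d(x,y),0).
\]

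To finish I combine~(\ref{uu2}), which bounds the left side from below by $d(Tx,Ty)$, with the hypothesis~(\ref{g1}), which bounds the right side from above by $\alpha k\, d(x,y)$. Chaining these yields $d(Tx,Ty)\leqslant \alpha k\, d(x,y)$, as claimed. The only real subtlety is ensuring that the auxiliary sequence $z_n$ is distinct from both $x$ and $y$ so that the triple qualifies as pairwise distinct in Definition~\ref{d4}; this is exactly what the accumulation-point hypothesis supplies, and the remainder is a routine continuity argument built on Proposition~\ref{p36}.
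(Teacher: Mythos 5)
Your argument is correct and coincides with the paper's own proof: both insert an auxiliary sequence $z_n\to x$ of points distinct from $x$ and $y$, apply~(\ref{u5}) to the pairwise distinct triple, pass to the limit using continuity of the metric, of $T$, and of $F$ and $G$, and then conclude via~(\ref{uu2}) and~(\ref{g1}). Your explicit handling of the trivial case $y=x$ is a minor addition not present in the paper but harmless.
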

\begin{proof}
Let $y\in X$ and let $x\in X$ be an accumulation point. Then there exists a sequence $(z_n)$ such that $z_n\to x$, $z_n\neq y$, $z_n\neq x$, and all $z_n$ are different.
Hence, by~(\ref{u5}) the inequality
  \begin{equation*}
   F(d(Tx,Ty),d(Ty,Tz_n),d(Tx,Tz_n)) \leqslant \alpha G(d(x,y),d(y,z_n),d(x,z_n))
  \end{equation*}
holds for every $n\in \NN^+$. Since every metric is continuous, we have $d(y,z_n) \to d(x,y)$, $d(x,z_n)\to 0$, $d(Ty,Tz_n)\to d(Tx,Ty)$ and  $d(Tx,Tz_n)\to 0$. Letting $n\to \infty$ and using the continuity of $F$ and $G$ we obtain
  \begin{equation*}
  F(d(Tx,Ty),d(Tx,Ty),0) \leqslant \alpha G(d(x,y),d(x,y),0).
  \end{equation*}
By~(\ref{uu2}) we have
  \begin{equation*}
d(Tx,Ty) \leqslant \alpha G(d(x,y),d(x,y),0).
  \end{equation*}
Using~(\ref{g1}), we obtain~(\ref{e3}) with another coefficient $\alpha k$.
\end{proof}
\begin{cor}\label{cor3}
Let the supposition of Proposition~\ref{p3} hold. If all points of $X$ are accumulation points, then $T$ is a contraction mapping.
\end{cor}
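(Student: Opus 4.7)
The statement is an almost immediate consequence of Proposition~\ref{p3}, so the plan is very short. I would simply apply Proposition~\ref{p3} pointwise: under the hypothesis that every point of $X$ is an accumulation point, the proposition gives, for each fixed $x\in X$ and each $y\in X$, the inequality
\[
d(Tx,Ty)\leqslant \alpha k\, d(x,y),
\]
with the same constant $\alpha k<1$ (independent of $x$ and $y$, since $\alpha$ comes from the $(F,G)$-contractivity of $T$ and $k$ comes from~(\ref{g1})).

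Since $x\in X$ was arbitrary and the constant $\alpha k$ does not depend on the chosen accumulation point, the inequality holds for \emph{all} $x,y\in X$. This is precisely the contraction condition~(\ref{e3}) with coefficient $\alpha k\in[0,1)$, so $T$ is a contraction on $X$.

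The only subtlety worth checking is the uniformity of the constant: one should verify that the $k$ supplied by~(\ref{g1}) and the $\alpha$ from Definition~\ref{d4} do not depend on the particular accumulation point to which Proposition~\ref{p3} is applied. Both are structural constants attached to $T$, $F$, and $G$, so this is immediate, and there is no real obstacle in the argument.
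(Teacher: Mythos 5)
Your proposal is correct and is exactly the intended argument: the paper states the corollary without a separate proof precisely because it follows by applying Proposition~\ref{p3} at every point of $X$ with the uniform constant $\alpha k<1$. Your remark on checking that $\alpha$ and $k$ are independent of the base point is the right (and only) point of care, and it is indeed immediate.
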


\section{Edelstein type fixed point theorem}\label{sec3}

The main result of this section is an analogue of Edelstein's fixed point theorem for mappings contracting perimeters of triangles.

\begin{defn}\label{dd1}
Let $(X,d)$ be a metric space with $|X|\geqslant 3$. We shall say that $T\colon X\to X$ is a mapping \emph{contracting perimeters of triangles in the sense of Edelstein} on $X$ if the inequality
  \begin{equation}\label{ee1}
   d(Tx,Ty)+d(Ty,Tz)+d(Tx,Tz) < d(x,y)+d(y,z)+d(z,x)
  \end{equation}
  holds for all three pairwise distinct points $x,y,z \in X$.

\end{defn}

\begin{prop}
Mappings contracting perimeters of triangles in the sense of Edelstein are continuous.
\end{prop}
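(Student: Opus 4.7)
The plan is to mirror the structure of the proof of Proposition~\ref{p36}, separating the cases of isolated points and accumulation points of $X$. If $x_0 \in X$ is isolated, then $T$ is automatically continuous at $x_0$, so the real content is at accumulation points. Fix an accumulation point $x_0$ and $\varepsilon > 0$; I want to exhibit $\delta > 0$ such that $d(Tx_0, Tx) < \varepsilon$ for every $x \in X$ with $d(x_0, x) < \delta$. The case $x = x_0$ is trivial, so assume $x \neq x_0$, and use the fact that every neighbourhood of $x_0$ contains infinitely many points to pick a third point $y \in X$ with $y \neq x_0$, $y \neq x$, and $d(x_0, y) < \delta$.

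The key calculation is then to apply Definition~\ref{dd1} to the pairwise distinct triple $x_0, x, y$:
$$
d(Tx_0, Tx) \leqslant d(Tx_0, Tx) + d(Tx, Ty) + d(Tx_0, Ty) < d(x_0, x) + d(x, y) + d(x_0, y).
$$
An application of the triangle inequality $d(x, y) \leqslant d(x_0, x) + d(x_0, y)$ together with $d(x_0, x) < \delta$ and $d(x_0, y) < \delta$ yields $d(Tx_0, Tx) < 4\delta$. Setting $\delta = \varepsilon/4$ finishes the proof.

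The only subtlety, as compared with Proposition~\ref{p36}, is that here there is no contraction constant $\alpha < 1$ available to absorb errors: the strict inequality~(\ref{ee1}) is used directly, and the smallness of $d(Tx_0, Tx)$ is forced by bounding it by the entire perimeter of a triangle whose three vertices have been arranged to lie in a $\delta$-ball around $x_0$. I do not anticipate a genuine obstacle; the argument is essentially a careful choice of an auxiliary point $y$ followed by a triangle-inequality manipulation. The only thing to double-check is that the existence of such a $y$ (distinct from both $x_0$ and $x$ and inside the $\delta$-ball) is guaranteed by the definition of accumulation point recalled just before Proposition~\ref{p3}.
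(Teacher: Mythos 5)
Your proof is correct and follows essentially the same route as the paper's: treat isolated points trivially, and at an accumulation point choose an auxiliary point $y$ in the $\delta$-ball distinct from $x_0$ and $x$, bound $d(Tx_0,Tx)$ by the image perimeter, apply~(\ref{ee1}) and the triangle inequality to get $d(Tx_0,Tx)<4\delta$, and take $\delta=\ve/4$. No issues.
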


\begin{proof}
Let $(X,d)$ be a metric space with $|X|\geqslant 3$, $T\colon X\to X$ be a mapping contracting perimeters of triangles on $X$ and let $x_0$ be an isolated point in $X$. Then, clearly, $T$ is continuous at $x_0$. Let now $x_0$ be an accumulation point. Let us show that for every $\ve>0$, there exists $\delta>0$ such that $d(Tx_0,Tx)<\ve$ whenever $d(x_0,x)<\delta$. Let $x\neq x_0$. Since $x_0$ is an accumulation point, for every $\delta>0$ there exists $y\in X$, $x_0\neq y\neq x$, such that $d(x_0,y)< \delta$. By~(\ref{ee1}) we have
$$
d(Tx_0,Tx)\leqslant d(Tx_0,Tx)+d(Tx_0,Ty)+d(Tx,Ty)
$$
$$
<d(x_0,x)+d(x_0,y)+d(x,y).
$$
Using the triangle inequality $d(x,y) \leqslant d(x_0,x)+d(x_0,y)$, we have
$$
d(Tx_0,Tx) < 2(d(x_0,x)+d(x_0,y))<2(\delta + \delta)=4\delta.
$$
Setting $\delta=\ve / 4$, we obtain the desired inequality.
\end{proof}

\begin{thm}\label{t1}
Let $(X,d)$, $|X|\geqslant 3$, be a metric space and let the mapping $T\colon X\to X$ satisfy the following three conditions:
\begin{itemize}
  \item [(i)] $T$ is a mapping contracting perimeters of triangles in the sense of Edelstein.
  \item [(ii)]  $T$ does not possess periodic points of prime period $2$.
   \item [(iii)] %\textcolor[rgb]{0.00,0.00,0.50}{There exists an $x\in X$ such that the sequence $(T^n(x))$ has a subsequence $(T^{n_i}(x))$ which converges to $\xi \in X$. }
       There exists an $x\in X$ such that the sequence $(\xi_n)$, $\xi_n=T^nx$, has a subsequence $(\xi_{n_i})$ which converges to $\xi \in X$.
\end{itemize}
Then $\xi$ is a fixed point of $T$. The general number of fixed points of $T$ is at most two.

\end{thm}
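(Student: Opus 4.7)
The plan is to adapt Edelstein's classical argument to perimeters of triangles. Set $\xi_n=T^nx$. If some $\xi_N$ is already a fixed point of $T$, the orbit stabilises at $\xi_N$, and since $\xi_{n_i}\to\xi$ we obtain $\xi=\xi_N$; so we may assume no $\xi_n$ is fixed. Then $\xi_n\neq\xi_{n+1}$ and $\xi_{n+1}\neq\xi_{n+2}$ for every $n$, while hypothesis (ii) rules out $\xi_n=\xi_{n+2}$ (otherwise $\xi_n$ would have prime period $2$). Hence the triple $\xi_n,\xi_{n+1},\xi_{n+2}$ is pairwise distinct for all $n$.

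Define the perimeter
\begin{equation*}
p_n:=d(\xi_n,\xi_{n+1})+d(\xi_{n+1},\xi_{n+2})+d(\xi_n,\xi_{n+2}).
\end{equation*}
Applying~(\ref{ee1}) to each pairwise distinct triple $\xi_n,\xi_{n+1},\xi_{n+2}$ yields $p_{n+1}<p_n$, so $(p_n)$ is strictly decreasing and converges to some $\ell\geqslant 0$. By the previous proposition $T$ is continuous, so $\xi_{n_i}\to\xi$ forces $\xi_{n_i+k}\to T^k\xi$ for $k=1,2,3$. Since every subsequence of a convergent sequence shares its limit, both $(p_{n_i})$ and $(p_{n_i+1})$ tend to $\ell$, which gives
\begin{equation*}
\ell=d(\xi,T\xi)+d(T\xi,T^2\xi)+d(\xi,T^2\xi)=d(T\xi,T^2\xi)+d(T^2\xi,T^3\xi)+d(T\xi,T^3\xi).
\end{equation*}

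Now suppose for contradiction that $T\xi\neq\xi$. Hypothesis (ii) then forbids $T^2\xi=\xi$. If $T^2\xi=T\xi$, then also $T^3\xi=T\xi$, so the right-hand side of the displayed equation collapses to $\ell=0$ while the left-hand side equals $2d(\xi,T\xi)>0$, a contradiction. Otherwise $\xi,T\xi,T^2\xi$ are pairwise distinct, and applying~(\ref{ee1}) to this triple gives
\begin{equation*}
d(T\xi,T^2\xi)+d(T^2\xi,T^3\xi)+d(T\xi,T^3\xi)<d(\xi,T\xi)+d(T\xi,T^2\xi)+d(\xi,T^2\xi),
\end{equation*}
i.e.\ $\ell<\ell$, also absurd. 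Hence $T\xi=\xi$. For the bound on the number of fixed points, three pairwise distinct fixed points would turn~(\ref{ee1}) into an equality, so $T$ admits at most two.

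The step I expect to require the most care is the degenerate subcase $T^2\xi=T\xi$, since the Edelstein-type inequality~(\ref{ee1}) cannot be applied to the triple $\xi,T\xi,T^2\xi$ in that situation; the trick is to exploit the convergence of the entire sequence $(p_n)$ so that the shifted subsequence $(p_{n_i+1})$ also tends to $\ell$ and delivers the needed contradiction.
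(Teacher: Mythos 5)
Your proof is correct, and it takes a genuinely different route from the paper's. The paper transplants Edelstein's original scheme: it introduces the ratio $r(q,s,t)=p(Tq,Ts,Tt)/p(q,s,t)$ on the set of pairwise distinct triples, uses its continuity to find a neighbourhood of $(\xi,T\xi,T^2\xi)$ where $r<R<1$, chooses balls of radius $\rho<\tfrac17 p(\xi,T\xi,T^2\xi)$ to force the lower bound $p(\xi_{n_i},\xi_{n_i+1},\xi_{n_i+2})>\rho$, and contradicts this with the geometric decay $R^{\ell-j}\to 0$; the degenerate possibility that $T\xi$ is fixed while $\xi$ is not is then handled by a separate triangle-inequality estimate. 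You instead observe that, once no iterate is fixed, condition (ii) makes every consecutive triple pairwise distinct, so the perimeter sequence $(p_n)$ is strictly decreasing and converges to some $\ell\geqslant 0$; continuity of $T$ and of the metric identifies $\ell$ with both $p(\xi,T\xi,T^2\xi)$ and $p(T\xi,T^2\xi,T^3\xi)$, and the two cases $T^2\xi=T\xi$ (forcing $\ell=0$ against $\ell=2d(\xi,T\xi)>0$) and $\xi,T\xi,T^2\xi$ pairwise distinct (forcing $\ell<\ell$) each give a contradiction, with $T^2\xi=\xi$ excluded by (ii). This is in the spirit of the Daffer--Kaneko proof of Edelstein's theorem cited in the paper: it is shorter, avoids the neighbourhood/ratio machinery and the $\rho$-estimates entirely, and absorbs the degenerate case into the same limit computation, whereas the paper's argument stays closer to Edelstein's classical technique. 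Your uniqueness argument (three distinct fixed points would violate the strict inequality~(\ref{ee1})) coincides with the paper's.
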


\begin{proof}
Suppose that $\xi$ is not a fixed point of $T$. Then, by condition (ii), there are only two possibilities:  1) the points $\xi$, $T\xi$ and $T^2\xi$ are pairwise distinct; 2) $T\xi$ is fixed.

Suppose that possibility 1) holds. Throughout the text below for any $x,y,z\in X$ denote the perimeter $p(x,y,z)$ on the  points $x,y,z$ by
$$
p(x,y,z)=d(x,y)+d(y,z)+d(z,x).
$$
The mapping $r(q,s,t)$ of%qst
$$
Y:=X\times X \times X\setminus(\{(x,y,z):x=y\}\cup \{(x,y,z):y=z\}\cup \{(x,y,z):z=x\})
$$
into the real line, defined by
$$
r(p,q,s):=\frac{p(Tq,Ts,Tt)}{p(q,s,t)}
$$
is continuous on $Y$ since every metric is continuous and the denominator of the fraction is nonzero. Hence, there exists a neighbourhood $U$ of $(\xi,T(\xi),T^2(\xi))$ and $R$ such that $(q,s,t)\in U$ implies
\begin{equation}\label{e11}
0 \le r(q,s,t)<R<1.
\end{equation}
Let $S_1$, $S_2$ and $S_3$ be open balls centered at $\xi$, $T\xi$ and $T^2\xi$, respectively, of radius $\rho >0$ small enough so as to have
\begin{equation}\label{eq1}
\rho < \frac{1}{7}p(\xi,T\xi,T^2\xi)
\end{equation}
and $S_1\times S_2 \times S_3 \subseteq U$.
Consider the sequences $(\xi_{n_i+1})$ and $(\xi_{n_i+2})$. Since $T$ is continuous, by condition (iii) the first sequence converges to $T\xi$ and the second one converges to $T^2\xi$. Hence, and since $\xi_{n_i}\to \xi$, there exists a positive integer $N$ such that $i>N$ implies $\xi_{n_i}\in S_1$, $\xi_{n_i+1}\in S_2$ and $\xi_{n_i+2}\in S_3$. Then

\begin{equation}\label{ee3}
p(\xi_{n_i},\xi_{n_i+1},\xi_{n_i+2})>\rho, \quad (i>N).
\end{equation}
Indeed, assume the opposite. Then, by the triangle inequality,
$$
p(\xi,T\xi,T^2\xi) \le d(\xi,\xi_{n_i})+d(\xi_{n_i},\xi_{n_i+1})+d(\xi_{n_i+1}, T\xi)
$$
$$
+d(T\xi,\xi_{n_i+1}))+d(\xi_{n_i+1},\xi_{n_i+2})+d(\xi_{n_i+2}, T^2\xi)
$$
$$
+d(T^2\xi_{n_i+2}),\xi_{n_i+2})+d(\xi_{n_i+2},\xi_{n_i})+d(\xi_{n_i},\xi)
$$
$$
\le 6\rho + p(\xi_{n_i},\xi_{n_i+1},\xi_{n_i+2}) \le 7 \rho,
$$
which contradicts to~(\ref{eq1}).

Suppose first that $\xi_i$ is a fixed point of $T$ for some $i$. Clearly, in this case every subsequence of $(\xi_n)$ converges to $\xi_i=\xi$. Suppose now that $\xi_i$ is not a fixed point of the mapping $T$ for every $i=1,2,...$. Since $\xi_i$ is not fixed, then $\xi_i\neq \xi_{i+1}=T\xi_i$. Since $T$ have no periodic points of prime period $2$ we have $\xi_{i+2}=T(T\xi_i)\neq \xi_i$ and by the supposition that $\xi_{i+1}$ is not fixed we have $\xi_{i+1}\neq \xi_{i+2}=T\xi_{i+1}$. Hence, every three consecutive points $\xi_i$, $\xi_{i+1}$ and $\xi_{i+2}$ are pairwise distinct.

Let us show that all $\xi_i$ are different. Set
$$
p_0=p(x_0,x_1,x_2),\quad p_1=p(x_1,x_2,x_3), \cdots, p_n=p(x_n,x_{n+1},x_{n+2}), \cdots .
$$
Since  $\xi_i$, $\xi_{i+1}$ and $\xi_{i+2}$  are pairwise distinct by~(\ref{ee1}) we have \begin{equation}\label{e2}
p_0>p_1>...>p_n>\ldots .
\end{equation}
Suppose that $j\geqslant 3$ is a minimal natural number such that $\xi_j=\xi_i$ for some $i$ such that $0\leqslant i<j-2$. Then  $\xi_{j+1}=\xi_{i+1}$, $\xi_{j+2}=\xi_{i+2}$. Hence, $p_i=p_j$ which contradicts to~(\ref{e2}).

Since $\xi_{n_i}, \xi_{n_i+1}$ and $\xi_{n_i+2}$ are pairwise distinct points of $(\xi_i)$, for $i>N$ we can use~(\ref{e11}):
$$
\begin{aligned}
 p(\xi_{n_i+1},\xi_{n_i+2},\xi_{n_i+3})
< R \cdot p(\xi_{n_i},\xi_{n_i+1},\xi_{n_i+2}).
\end{aligned}
$$
Repeated use of condition~(\ref{ee1}) and this inequality gives for $\ell>j>N$
$$
p(\xi_{n_{\ell}},\xi_{n_{\ell}+1},\xi_{n_{\ell}+2})
\leqslant  p(\xi_{n_{\ell-1}+1},\xi_{n_{\ell-1}+2},\xi_{n_{\ell-1}+3})
$$
$$
< R\cdot p(\xi_{n_{\ell-1}},\xi_{n_{\ell-1}+1},\xi_{n_{\ell-1}+2}) \leqslant \ldots
$$
$$
< R^{\ell-j} \cdot p(\xi_{n_{j}},\xi_{n_{j}+1},\xi_{n_{j}+2}) \rightarrow 0,\quad \ell\rightarrow \infty,
$$
which contradicts to~(\ref{ee3}). Thus, $T(\xi)=\xi$.

Suppose that possibility 2) holds.
Since $T\xi_{n_i}=\xi_{n_i+1}\to T\xi$, for every $\ve>0$ there exist $N_1>0$ such that for every $i,j>N_1$,  $i\neq j$, inequalities $d(\xi_{n_i+1},T\xi)<\ve$ and $d(\xi_{n_j+1},T\xi)<\ve$ hold. Hence, using the triangle inequality
$
d(\xi_{n_i+1},\xi_{n_j+1})\leqslant  d(\xi_{n_i+1},T\xi)+d(T\xi,\xi_{n_j+1}),
$
we get
\begin{equation}\label{e13}
p(\xi_{n_i+1},\xi_{n_j+1},T\xi)<4\ve.
\end{equation}
Since there is no fixed points in the sequence $(\xi_n)$ and $T\xi$ is fixed for $T$, we have that $T\xi\neq\xi_n$ for all $n\geqslant 1$.
Hence, the points $\xi_{n_{i+1}}$, $\xi_{n_{j+1}}$ and $T\xi$ are pairwise distinct and for any $k\geqslant 1$ we can consecutively apply $k$ times inequality~(\ref{ee1}) to inequality~(\ref{e13}):
\begin{equation}\label{e33}
p(T^k\xi_{n_i+1},T^k\xi_{n_j+1},T^kT\xi)
=p(\xi_{n_i+k+1},\xi_{n_j+k+1},T\xi)<4\ve.
\end{equation}

On the other hand, since $\xi_{n_i}\to \xi$, for every $\ve>0$ there exist $N_2>0$ such that for every $m>N_2$ the inequality $d(\xi_{n_m},\xi)<\ve$  holds. It is clear that it is always possible to choose $m>N_2$ such that $n_m>n_i+1$. Let $k=n_m-n_i-1$. Hence, $n_m=n_i+k+1$ and $\xi_{n_m}=\xi_{n_i+k+1}$.
Further, using the inequality $d(\xi,T\xi)\leqslant d(\xi, \xi_{n_m})+d(\xi_{n_m},\xi_{n_j+k+1})+d(\xi_{n_j+k+1},T\xi)$, we obtain
$$p(\xi_{n_i+k+1},\xi_{n_j+k+1},T\xi)=p(\xi_{n_m},\xi_{n_j+k+1},T\xi)$$
$$
=d(\xi_{n_m},\xi_{n_j+k+1})+d(\xi_{n_j+k+1},T\xi)+d(T\xi,\xi_{n_m})
$$
$$
\geqslant d(\xi,T\xi)-d(\xi,\xi_{n_m})-d(\xi_{n_j+k+1},T\xi)+d(\xi_{n_j+k+1},T\xi)+d(T\xi,\xi_{n_m})
$$
$$
> d(\xi,T\xi)-\ve.
$$
Setting $\ve= d(\xi,T\xi)/5$ we obtain that this inequality contradicts to~(\ref{e33}).

Suppose that there exists at least three pairwise distinct fixed points $x$, $y$ and $z$.  Then $Tx=x$, $Ty=y$ and $Tz=z$, which contradicts to~(\ref{ee1}).
\end{proof}

\begin{cor}\label{c1}
Let $(X,d)$, $|X|\geqslant 3$, be a compact metric space and let the mapping $T\colon X\to X$ satisfy the following two conditions:
\begin{itemize}
  \item [(i)] $T$ is a mapping contracting perimeters of triangles in the sense of Edelstein.
  \item [(ii)]  $T$ does not possess periodic points of prime period $2$.
\end{itemize}
Then  $T$ has a fixed point.
The general number of fixed points of $T$ is at most two.
\end{cor}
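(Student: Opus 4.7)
The plan is to deduce this corollary directly from Theorem~\ref{t1} by observing that compactness automatically supplies the subsequential convergence hypothesis~(iii) of that theorem. Concretely, I would pick an arbitrary $x\in X$ and form the sequence of iterates $\xi_n=T^nx$. Since $(X,d)$ is compact (and in particular sequentially compact, as it is a compact metric space), the sequence $(\xi_n)$ has a convergent subsequence $(\xi_{n_i})$ whose limit $\xi$ lies in $X$. This verifies condition~(iii) of Theorem~\ref{t1}.

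Next I would note that the two hypotheses of the corollary are literally conditions~(i) and~(ii) of Theorem~\ref{t1}. Thus all three hypotheses of Theorem~\ref{t1} hold, and the theorem applies: $\xi$ is a fixed point of $T$, and the total number of fixed points of $T$ is at most two. This gives the conclusion of the corollary.

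There is no real obstacle here; the only thing to be careful about is to invoke sequential compactness (equivalent to compactness for metric spaces) rather than compactness in the open-cover sense, so that one genuinely obtains a convergent subsequence of the iteration sequence. Once this is done, the corollary follows immediately with no further computation.
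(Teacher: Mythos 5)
Your proposal is correct and matches the paper's proof, which likewise deduces the corollary from Theorem~\ref{t1} by noting that compactness of $X$ guarantees condition~(iii). Your extra remark about using sequential compactness of a compact metric space is a sensible elaboration but does not change the argument.
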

\begin{proof}
Indeed, condition (iii) of Theorem~\ref{t1} holds if $X$ is compact.
\end{proof}

\begin{prop}\label{p1}
Let $(X,d)$, $|X|\geqslant 3$, be a metric space and let  $T\colon X\to X$ be a mapping contracting perimeters of triangles in the sense of Edelstein. If $x$ is an accumulation point of  $X$, then inequality~(\ref{e31}) holds for all points $y\in X$, $y\neq x$.
\end{prop}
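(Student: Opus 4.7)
The plan is to follow the template of Proposition~\ref{p3}, feeding a varying third vertex into the Edelstein perimeter inequality and then passing to the limit. Since $x$ is an accumulation point of $X$, I would first extract a sequence $(z_n)$ of pairwise distinct points with $z_n\neq x$, $z_n\neq y$, and $z_n\to x$. This is possible because every ball centered at $x$ contains infinitely many points of $X$, so we can choose them distinct and simply discard the at most one value equal to $y$. For each $n$ the three points $x,y,z_n$ are pairwise distinct, so~(\ref{ee1}) gives
$$
d(Tx,Ty)+d(Ty,Tz_n)+d(Tx,Tz_n)<d(x,y)+d(y,z_n)+d(x,z_n).
$$

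Next I would invoke the reverse triangle inequality $d(Ty,Tz_n)\geqslant d(Tx,Ty)-d(Tx,Tz_n)$ on the left-hand side and the triangle inequality $d(y,z_n)\leqslant d(x,y)+d(x,z_n)$ on the right-hand side, so that both the auxiliary terms $d(Tx,Tz_n)$ and the ``extra'' $d(x,z_n)$ are exposed symmetrically. The strict perimeter inequality then yields
$$
2d(Tx,Ty)\leqslant d(Tx,Ty)+d(Ty,Tz_n)+d(Tx,Tz_n)<2d(x,y)+2d(x,z_n),
$$
that is, $d(Tx,Ty)<d(x,y)+d(x,z_n)$ for every $n$. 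Since mappings contracting perimeters of triangles in the sense of Edelstein are continuous by the preceding proposition, $Tz_n\to Tx$, and since the metric is continuous we also have $d(x,z_n)\to 0$. Passing to the limit delivers $d(Tx,Ty)\leqslant d(x,y)$.

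The last and most delicate step is upgrading $\leqslant$ to the strict inequality~(\ref{e31}). I would argue by contradiction: assuming $d(Tx,Ty)=d(x,y)$, this equality cancels in the strict perimeter inequality, reducing it to $d(Ty,Tz_n)+d(Tx,Tz_n)<d(y,z_n)+d(x,z_n)$ for every $n$, and one then tries to recombine this with the two triangle estimates above to force a contradiction. This is precisely the step I expect to be the main obstacle: unlike in Proposition~\ref{p3}, where strictness is guaranteed by the coefficient $\alpha k<1$, here no uniform factor is available, and the gap $[d(x,y)+d(y,z_n)+d(x,z_n)]-[d(Tx,Ty)+d(Ty,Tz_n)+d(Tx,Tz_n)]$ is strictly positive for each $n$ but can vanish as $z_n\to x$. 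So a naive passage to the limit is not enough and the strictness has to be harvested qualitatively from the family of Edelstein inequalities rather than from any single one of them.
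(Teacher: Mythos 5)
Your proposal proves only part of the statement, and the part it leaves open is a genuine gap: the non-strict bound $d(Tx,Ty)\leqslant d(x,y)$ is established correctly (your reduction via the reverse triangle inequality to $d(Tx,Ty)<d(x,y)+d(x,z_n)$ is fine, and it does not even need continuity of $T$), but the strict inequality~(\ref{e31}) is never obtained. Your suspicion that strictness cannot be ``harvested qualitatively'' from the family of Edelstein inequalities is in fact correct in the strongest possible sense: the strict conclusion fails in general, so no completion of your contradiction argument can exist. Take $X=\{0,1\}\cup\{1+\tfrac1n:\ n\in\NN\}$ with the metric induced from $\RR$, and define $T(0)=0$, $T(1)=1$, $T(1+\tfrac1n)=1$. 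For every pairwise distinct triple the image perimeter equals $0$ or $2$, while the original perimeter strictly exceeds it (for instance, for $(0,1,1+\tfrac1n)$ the two perimeters are $2$ and $2+\tfrac2n$; for $(0,1+\tfrac1n,1+\tfrac1m)$ they are $2$ and $2+\tfrac1n+\tfrac1m+|\tfrac1n-\tfrac1m|$), so $T$ contracts perimeters of triangles in the sense of Edelstein. Yet $x=1$ is an accumulation point and $d(T1,T0)=1=d(1,0)$, so~(\ref{e31}) fails for $y=0$.

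For comparison, the paper's proof follows exactly your first route (choose $z_n\to x$ with $z_n\neq x$, $z_n\neq y$, apply~(\ref{ee1}), use continuity of $T$ and of the metric), but then lets $n\to\infty$ and asserts $2d(Tx,Ty)<2d(x,y)$, i.e.\ it passes a strict inequality to the limit --- precisely the step you rightly refused to take, and which the example above shows is not merely unjustified but leads to a false conclusion. So the honest outcome is: your argument proves that $d(Tx,Ty)\leqslant d(x,y)$ holds for all $y$ whenever $x$ is an accumulation point (so, e.g., $T$ is nonexpansive if all points of $X$ are accumulation points), but the strict inequality claimed in Proposition~\ref{p1} needs additional hypotheses (compare Proposition~\ref{p3}, where the uniform factor $\alpha k<1$ supplies the strictness); as stated it is not provable, and the paper's own proof is flawed at exactly the point you identified as the main obstacle.
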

\begin{proof}
Let $x\in X$ be an accumulation point and let $y\in X$, $y\neq x$. Since $x$ is an accumulation point, then there exists a sequence $z_n\to x$ such that $z_n \neq x$, $z_n \neq y$ and all $z_n$ are different.
Hence, by~(\ref{ee1}) the inequality
  \begin{equation*}
   d(Tx,Ty)+d(Ty,Tz_n)+d(Tx,Tz_n) < d(x,y)+d(y,z_n)+d(x,z_n)
  \end{equation*}
holds for every $n\in \mathbb N$. Since $d(x,z_n)\to 0$ and every metric is continuous we have $d(y,z_n) \to d(x,y)$. Since $T$ is continuous, we have $d(Tx,Tz_n)\to 0$ and, consequently,  $d(Ty,Tz_n)\to d(Tx,Ty)$. Letting $n\to \infty$, we obtain
  \begin{equation*}
   d(Tx,Ty)+d(Tx,Ty) <d(x,y)+d(x,y),
  \end{equation*}
which is equivalent to~(\ref{e31}).
\end{proof}
%The following corollary establishes a very interesting property of mappings contracting perimeters of triangles.
\begin{cor}\label{cor1}
Let $(X,d)$, $|X|\geqslant 3$, be a metric space and let $T\colon X\to X$ be a mapping contracting perimeters of triangles in the sense of Edelstein. If all points of $X$ are accumulation points, then the mapping $T$ is contractive.
\end{cor}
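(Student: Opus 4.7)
The plan is to obtain the corollary as an immediate specialization of Proposition~\ref{p1}. Given any two distinct points $x,y \in X$, the hypothesis that every point of $X$ is an accumulation point guarantees in particular that $x$ is an accumulation point. Hence Proposition~\ref{p1} applies to the pair $(x,y)$ and yields $d(Tx,Ty) < d(x,y)$. Since $x \neq y$ was arbitrary, inequality~(\ref{e31}) holds for every pair of distinct points of $X$, which is by definition the assertion that $T$ is contractive.

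There is no substantive obstacle here: all the analytic work --- choosing an auxiliary sequence $z_n \to x$ with $z_n \neq x$, $z_n \neq y$ and all $z_n$ distinct, applying the perimeter inequality~(\ref{ee1}) to the three pairwise distinct points $x,y,z_n$, and passing to the limit via continuity of the metric together with the continuity of $T$ (established in the proposition that precedes Theorem~\ref{t1}) --- has already been carried out inside the proof of Proposition~\ref{p1}. Thus the only content of the corollary is to observe that the hypothesis ``every point is an accumulation point'' removes the sole obstruction in Proposition~\ref{p1}, namely the requirement that the first argument be an accumulation point, and so promotes the conclusion from a one-sided statement (inequality~(\ref{e31}) holding for a fixed accumulation point $x$ and all $y \neq x$) to a fully symmetric one (holding for all distinct $x,y \in X$).
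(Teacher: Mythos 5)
Your proposal is correct and coincides with the paper's intended argument: the corollary is stated as an immediate consequence of Proposition~\ref{p1}, obtained exactly as you do by applying that proposition to an arbitrary pair of distinct points $x,y$, with $x$ an accumulation point by hypothesis.
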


\begin{ex}
Let us construct an example of the mapping $T$ contracting perimeters of triangles in the sense of Edelstein, which is not a contractive mapping. Let $X=\{x,y,z\}$, $d(x,y)=d(y,z)=d(x,z)=1$, and let $T\colon X\to X$ be such that $Tx= x$, $Ty= y$ and $Tz=x$. One can easily see that~(\ref{ee1}) holds and~(\ref{e31}) does not hold since $x$ and $y$ are fixed points.
\end{ex}

\begin{ex}
Let us construct an example of a mapping $T\colon X\to X$ contracting perimeters of triangles in the sense of Edelstein that is not a contractive mapping and that is not a mapping contracting perimeters of triangles for a metric space $X$ with $|X|=\aleph_0$.
Let $X=\{x^*, x_0,x_0',x_1,x_1',\ldots \}$ and let $\ve$ be a positive real number.

Define the metric $d$ on $X$ as follows:
$$
d(x,y)=
\begin{cases}
\frac{1}{i^2}, &\text{if } x=x_{i-1} \text{ or } x=x_{i-1}',\\
&\text{and }~  y=x_{i}, \text{ or } y=x_{i}',~ i=1,2,3,...,\\
\frac{\ve}{i^2}, &\text{if } x=x_i, \,  y=x_{i}'\\
&\text{or }  x=x_{i-1}, y=x_{i-1}',~ i=1,3,5,...,\\
d(x_i,x_{i+1})+\cdots+d(x_{j-1},x_j), &\text{if } x=x_i \text{ or }  x=x_i'\\ &\text{and }  y=x_{j} \text{ or } y=x_{j}', ~~ i+1<j,\\
\frac{\pi^2}{6}-d(x_0,x_i), &\text{if } x=x_i, \, y=x^*,\\
0, &\text{if } x=y,
\end{cases}
$$
see Figure~\ref{fig1}.
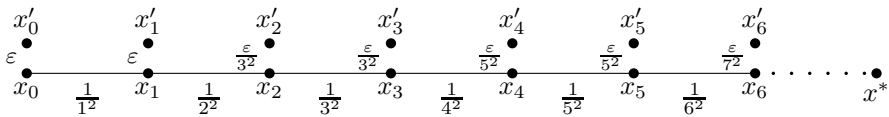
\begin{figure}[ht]
\begin{center}
\begin{tikzpicture}[scale=0.8]
\draw (1,0) node [below] {{$\frac{1}{1^2}$}};
\draw (3,0) node [below] {{$\frac{1}{2^2}$}};
\draw (5,0) node [below] {{$\frac{1}{3^2}$}};
\draw (7,0) node [below] {{$\frac{1}{4^2}$}};
\draw (9,0) node [below] {{$\frac{1}{5^2}$}};
\draw (11,0) node [below] {{$\frac{1}{6^2}$}};

\draw       (0,0.26) node [left] {\small{$\ve$}};
\draw       (2,0.26) node [left] {\small{$\ve$}};
\draw       (4,0.26) node [left] {\small{$\frac{\ve}{3^2}$}};
\draw       (6,0.26) node [left] {\small{$\frac{\ve}{3^2}$}};
\draw       (8,0.26) node [left] {\small{$\frac{\ve}{5^2}$}};
\draw       (10,0.26) node [left] {\small{$\frac{\ve}{5^2}$}};
\draw       (12,0.26) node [left] {\small{$\frac{\ve}{7^2}$}};

\draw       (0,0.5) node [above] {$x_0'$};
\draw       (2,0.5) node [above] {$x_1'$};
\draw       (4,0.5) node [above] {$x_2'$};
\draw       (6,0.5) node [above] {$x_3'$};
\draw       (8,0.5) node [above] {$x_4'$};
\draw       (10,0.5) node [above] {$x_5'$};
\draw       (12,0.5) node [above] {$x_6'$};
 \foreach \i in {(0,0.5),(2,0.5),(4,0.5),(6,0.5),(8,0.5),(10,0.5),(12,0.5)}
  \fill[black] \i circle (2.4pt);

\draw (0,0) node [below] {$x_0$} --
      (2,0) node [below] {$x_1$} --
      (4,0) node [below] {$x_2$} --
      (6,0) node [below] {$x_3$} --
      (8,0) node [below] {$x_4$} --
      (10,0) node [below] {$x_5$} --
      (12,0) node [below] {$x_6$};
\draw (14,0) node [below] {$x^*$};

 \foreach \i in {(0,0),(2,0),(4,0),(6,0),(8,0),(10,0),(12,0),(14,0)}
  \fill[black] \i circle (2.4pt);

 \foreach \i in {(12+0.3,0),(12+0.6,0),(12+0.9,0),(12+1.2,0),(12+1.5,0),(12+1.8,0)}
  \fill[black] \i circle (0.8pt);
\end{tikzpicture}
\begin{center}
\caption{The points of the space $(X,d)$ with distances between them.}\label{fig1}
\end{center}
\end{center}
\end{figure}

The reader can easily verify that for sufficiently small $\ve$ the metric $d$ is well defined. For this recall only the well-known fact $\sum_{i=1}^{\infty}\frac{1}{i^2}=\frac{\pi^2}{6}$.  Moreover, the space is complete with the single accumulation point $x^*$.

Define a mapping  $T\colon X\to X$ as $Tx_i=x_{i+1}$, $Tx_i'=x_{i+1}'$ for all $i=0,1,\ldots$ and $Tx^*=x^*$.
Since
$d(x_{i},x_{i}')=d(T{x_{i}},T{x_{i}'})$, $i=0,2,4,...$, using~(\ref{e31}), we see that $T$ is not a contractive mapping. Suppose that there exists $0\leqslant\alpha <1$ such that~(\ref{e1}) holds for all pairwise distinct $x,y,z \in X$. Let $x=x_i$, $y=x_{i+1}$, $z=x_{i+2}$. Consider the ratio
$$
\frac{d(Tx,Ty)+d(Ty,Tz)+d(Tx,Tz)}{d(x,y)+d(y,z)+d(x,z)}
$$
$$
=\frac{d(x_{i+1},x_{i+2})+d(x_{i+2},x_{i+3})+d(x_{i+1},x_{i+3})}
{d(x_{i},x_{i+1})+d(x_{i+1},x_{i+2})+d(x_{i},x_{i+2})}
$$
$$
={\left(\frac{2}{(i+2)^2}+\frac{2}{(i+3)^2}\right)}:
{\left(\frac{2}{(i+1)^2}+\frac{2}{(i+2)^2}\right)} \to 1
$$
as $i\to \infty$, which contradicts to~(\ref{e1}). Thus, $T$ is not a mapping contracting perimeters of triangles.
Verifying inequality~(\ref{ee1}) for all pairwise distinct $x,y,z\in X$ is almost evident. Thus, $T$ is a mapping contractive perimeters of triangles in the sense of Edelstein.
\end{ex}

\textbf{Acknowledgements.}  The second author was partially supported by the Volkswagen Foundation project ``From Modeling and Analysis to Approximation''.

%\bibliographystyle{unsrt}
%\bibliography{Xbib}

\end{document}